\numberwithin{equation}{section}
\newtheorem{thm}{Theorem}[section]
\newtheorem{prop}[thm]{Proposition}
\theoremstyle{definition}
\newtheorem{exmp}{Example}
\newtheorem{rem}{Remark}
\newtheorem*{ack}{Acknowledgements}
\begin{document}

 \centerline{\LARGE  A Witt type formula  \vspace{3mm} }
 \centerline{\large G.A.T.F.da Costa
\footnote{g.costa@ufsc.br}}
\centerline{\large Departamento
de Matem\'{a}tica} \centerline{\large Universidade Federal de
Santa Catarina} \centerline{\large
88040-900-Florian\'{o}polis-SC-Brasil}

\begin{abstract}

Given a finite, connected graph G, T its edge adjacency matrix, and N, a positive integer, this paper investigates some combinatorial and algebraic properties of the
Witt type formula 
\begin{equation*}
\Omega(N,T)= \frac{1}{N}
\sum_{g|N} \mu(g) \hspace{1mm} Tr T^{\frac{N}{g}}.
\end{equation*}
The sum ranges over the positive divisors of $N$ and $\mu$ is the M\"obius function.  
 
\vspace{5mm}

\end{abstract}

\section{ Introduction} \label{sec:in}

The objective of the present paper
is to investigate algebraic and combinatorial aspects of the formula given in the abstract
which gives the number of  equivalence classes of arbitrarily oriented but non-backtracking non-periodic closed  paths of  length $N$ in a oriented graph.
The formula is well known in association with the zeta function of a graph investigated by several authors.  See \cite{te,tee,storm} and references therein. 
As shown in this paper the formula has several properties of the Witt formula type not investigated previously, as far as I know. 

Let's recall Witt formula and some of its properties. Let $N$ be a positive integer,  $R$  a real number,  $\mu$  the classical  M\"obius function defined by the rules: a)
$\mu(+1)=+1$, b) $\mu(g)=0$, if
$g=p_{1}^{e_{1}}...p_{q}^{e_{q}}$, $p_{1},...,p_{q}$ primes, and
any $e_{i}>1$, c) $\mu(p_{1}...p_{q})=(-1)^{q}$.
The polynomial of degree $N$ in $R$ with rational coefficients given in terms of M\"obius function,
\begin{equation} \label{(1)}
{\cal M}(N;R) = \frac{1}{N} \sum_{g \mid N} \mu (g) R^{\frac{N}{g}},
\end{equation}
has many applications in  algebra and combinatorics \cite{mor}. It
is called the  {\it Witt formula} when it is  
associated with the following result  \cite{ser}:
If $V$ is an $R$-dimensional vector space and $L$ is the free Lie algebra generated by  $V$ then $L$ has a 
${\mathbb{Z}}_{>0}$ 
gradation $L= \bigoplus_{N=1}^{\infty} L_{N}$, $L_{N}$ has dimension given by
${\cal M}(N;R)$ which satisfies the formal relation
\begin{equation} \label{(2)}
\prod_{N=1}^{\infty} (1-z^{N})^{{\cal M}(N;R)} = 1- Rz
\end{equation}
called the {\it Witt identity}. Notice that the  coefficient of the linear term  in the right hand side is minus the dimension of the vector space that generates the Lie algebra. Witt identity  follows from the Poincar\'e-Birkoff-Witt theorem which says that
\begin{equation} \label{(3)}
\prod_{N=1}^{\infty} (1-z^{N})^{-{\cal M}(N;R)} = 1+\sum_{N=1}^{\infty} R^{N} z^{N}
\end{equation}
 is the generating function for the dimensions of the homogeneous subspaces of the enveloping algebra of $L$. 

Witt formula  is also called the {\it necklace polynomial}  because ${\cal M}(N;R)$ gives the number of
 inequivalent non-periodic colorings of  a circular string of $N$ beads -  a necklace -  with at most $R$ colors  \cite{metro}.
 In \cite{sherm} S. Sherman 
 associated it to
the number of equivalence classes of closed
non-periodic paths of length $N$ which traverse counterclockwisely without backtracking the edges of a graph
with $R$  loops counterclockwisely oriented and hooked to a single vertex so that $\Omega$ generalizes ${\cal M}$. Notice that the coefficient of the linear term in the right hand side of Witt identity  is minus the number of loops in the graph.

 It is proved that the formula for $\Omega(N,T)$ satisfies some identities analogous to those satisfied by ${\cal M}(N,R)$ which Carlitz proved in \cite{car} and
 Metropolis and Rota  in \cite{metro}. In \cite{mor}, Moree proved similar identities for his Witt transform. Also,
the formula can be interpreted as a dimension formula
and it can be associated to a coloring problem.

The paper is organized as follows. In section 2, some preliminary definitions and results are given. In section 3, 
several identities satisfied by the formula is proved. In section 4, the formula is interpreted as a dimension formula associated to free Lie super algebras and, in section 5, to necklace colorings.  The formula is applied to some examples. 

\section{ Preliminaries} \label{sec:pre}

Let $G=(V,E)$ be a finite  connected and oriented graph where $V$ is the set of
vertices with $|V|$ elements and $E$ is the set of oriented edges with $|E|$ elements  labeled $e_{1}$,
...,$e_{|E|}$. An edge has an origin and an end  as given by its orientation. The
graph may have multiple edges and loops.

Now, consider the graph $G^{*}$ built from $G$ by adding
in the opposing oriented edges $e_{|E|+1}=(e_{1})^{-1}$,
...,$e_{2 |E|}=(e_{|E|})^{-1}$, $(e_{i})^{-1}$ being the oriented edge
opposite to $e_{i}$ and with origin (end) the end (origin) of
$e_{i}$. In the case that $e_{i}$ is an oriented loop,
$e_{i+|E|}=(e_{i})^{-1}$ is just an additional oriented loop hooked
to the same vertex. Thus, $G^{*}$  has $2|E|$ oriented
edges.

A path in $G$ is given by an ordered sequence $(e_{i_{1}},...,e_{i_{N}})$, $i_{k} \in \{1, ..., 2|E|\}$, of oriented edges  in  $G^{*}$ such that the end of $e_{i_{k}}$ is the origin of $e_{i_{k+1}}$. Also, a path can be represented by a word in the alphabet of the symbols in the set $\{e_{1}, ..., e_{2E} \}$, a word being a concatenated product of symbols which respect the order of the symbols in the sequence.

In this paper, all paths are cycles. These are non-backtracking tail-less closed paths, that is, the end of $e_{i_{N}}$ coincides with the origin of $e_{i_{1}}$, subjected to the non-backtracking condition that $e_{i_{k+1}}  \neq e_{i_{k}+|E|}$. In another words,
a cycle  never goes immediately backwards over a  previous edge. Tail-less means that $e_{i_{1}}  \neq e_{i_{N}}^{-1}$.
The length of a cycle is the number of edges in its sequence. A cycle $p$ is called periodic if $p=q^r$ for some $r>1$ and $q$ is a non periodic cycle. Number $r$ is called the period of $p$. The cycle $(e_{i_{N}}, e_{i_{1}}, ...,e_{i_{N-1}})$ is called a circular permutation of $(e_{i_{1}},...,e_{i_{N}})$
and $(e_{i_{N}}^{-1},...,e_{i_{1}}^{-1})$ is an inversion of the latter. 
A cycle and its inverse are taken as distinct.

The classical M\"obius inversion formula is used several times in this paper. Given arithmetic functions $f$ and $g$ it states that $g(n)= \sum_{d|n} f(d)$ if and only if $f(n)=\sum_{d|n} \mu(d) g(n/d)$.

In order to count cycles of a given length in a graph $G$ a crucial tool is the edge adjacency matrix of $G$ \cite{te}. This 
is the $2|E| \times 2|E|$ matrix $T$ defined as follows:
$T_{ij}=1$, if end vertex of edge $i$ is the start vertex of
edge $j$ and edge $j$ is not the inverse edge of $i$;
$T_{ij}=0$, otherwise. 

\begin{thm}\label{thm1} (\cite{te})
The number $Tr T^{N}$ (over)counts cycles 
of length $N$ in a graph $G$.
\end{thm}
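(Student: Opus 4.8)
The plan is to show that $\mathrm{Tr}\, T^N$ equals the number of closed non-backtracking walks of length $N$ in $G^*$ (with a fixed starting edge in the sequence), and that such walks are exactly the cycles as defined, counted with their circular-permutation multiplicity — hence the ``overcounts.''

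\begin{proof}[Proof sketch]
First I would unwind the definition of matrix powers. Since $T_{ij}=1$ precisely when edge $i$ can be legally followed by edge $j$ (the end of $i$ is the start of $j$ and $j\neq i^{-1}$), and $T_{ij}=0$ otherwise, the standard combinatorial interpretation of matrix multiplication gives
\begin{equation*}
(T^N)_{i_1 i_{N+1}} = \sum_{i_2,\dots,i_N} T_{i_1 i_2} T_{i_2 i_3}\cdots T_{i_N i_{N+1}},
\end{equation*}
so $(T^N)_{i_1 i_{N+1}}$ counts the sequences $(e_{i_1},e_{i_2},\dots,e_{i_N},e_{i_{N+1}})$ of edges in $G^*$ in which each consecutive pair satisfies the non-backtracking adjacency condition recorded by $T$. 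Taking the trace imposes $i_{N+1}=i_1$, which forces the walk to close up: the end of $e_{i_N}$ is the start of $e_{i_1}$, and summing over the diagonal ranges over all admissible starting edges. Thus $\mathrm{Tr}\,T^N=\sum_{i_1}(T^N)_{i_1 i_1}$ counts ordered closed non-backtracking edge-sequences of length $N$.

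The key step is to check that these ordered closed sequences are precisely the cycles of length $N$ as defined in the Preliminaries, together with the bookkeeping that accounts for the word ``overcounts.'' I would verify each defining condition in turn: the internal non-backtracking condition $e_{i_{k+1}}\neq e_{i_k+|E|}$ is exactly the constraint $T_{i_k i_{k+1}}=1$ enforces for $k=1,\dots,N-1$; the closing adjacency (end of $e_{i_N}$ meets start of $e_{i_1}$) comes from the trace; and the remaining wrap-around non-backtracking/tail-less condition $e_{i_1}\neq e_{i_N}^{-1}$ comes from the diagonal entry $T_{i_N i_1}=1$ being part of the closed condition. So each diagonal contribution corresponds to a closed non-backtracking tail-less walk with a distinguished first edge.

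The reason this is an overcount, and the point I would emphasize, is that the trace records each such walk once \emph{for every choice of starting edge in its cyclic sequence}, i.e. once for each of its $N$ circular permutations — but a cycle is defined as an equivalence class only up to the additional identifications described (circular permutation), and periodic cycles have fewer distinct circular permutations than their length. Consequently $\mathrm{Tr}\,T^N$ counts rooted closed walks rather than cycles, and the same geometric cycle is tallied multiple times; hence ``(over)counts.'' I expect the main obstacle to be stating this multiplicity cleanly: one must be careful that a non-periodic cycle of length $N$ contributes exactly $N$ to $\mathrm{Tr}\,T^N$ while a cycle of period $r$ (so $p=q^r$ with $q$ non-periodic of length $N/r$) contributes only $N/r$ distinct rooted representatives repeated appropriately, and that inverses are counted separately per the convention that a cycle and its inverse are distinct. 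This careful accounting of circular permutations and periodicity is exactly what the Möbius-inversion formula $\Omega(N,T)$ is designed to correct, which motivates the constructions in the following sections.
\end{proof}
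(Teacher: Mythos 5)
Your proposal is correct and follows essentially the same route as the paper: unwind the matrix power so that diagonal entries count closed non-backtracking edge sequences, observe that the wrap-around factor $T_{i_N i_1}$ enforces both closure and the tail-less condition $e_{i_1}\neq e_{i_N}^{-1}$, and attribute the overcount to each cycle being tallied once per choice of starting edge. The finer bookkeeping you add for periodic cycles (contributing $N/r$ rather than $N$ rooted representatives) is not in the paper's proof of this theorem but is exactly the accounting the paper carries out later in the proof of Theorem \ref{thm2}, so it is consistent and correct.
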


\begin{proof}
Let $a$ and $b$ be two edges of $G$. The $(a,b)^{th}$ entry of
matrix $T^{N}$ is
\begin{displaymath}
(T^{N})_{(a,b)}= \sum_{e_{i_{1}}, ..., e_{i_{N-1}}} T_{(a,
e_{i_{1}})} T_{(e_{i_{1}},e_{i_{2}})}...T_{(e_{i_{N-1}},b)}
\end{displaymath}
From the definition of the entries of $T$ it follows that
$(T^{N})_{(a,b)}$ counts the number of paths of length $N$ with no backtracks from
edge $a$ to  edge $b$. For $b=a$, only closed paths are
counted. Taking the trace gives the number of non-backtracking closed paths with
every edge taken into account as starting edge, hence, the trace overcounts closed paths because every edge in the path is taken into account as starting edge.
The paths counted by the trace are tail-less, that is, $e_{i_{1}}  \neq e_{i_{N}}^{-1}$; otherwise,  $Tr T^{N}= \sum_{a} (T^{N})_{(a,a)}$ would have a term with entry $(a,a^{-1})$ which is not possible.
\end{proof}

\begin{thm} \label{thm2} \cite{tee}
 Denote by
 $\Omega(N,T)$ the number of  equivalence classes of non periodic cycles  of 
 length $N$ in $G$. This number is 
given by the following formula:
\begin{equation} \label{(4)}
\Omega(N,T)= \frac{1}{N}
 \sum_{g|N} \mu(g) \hspace{1mm} Tr T^{\frac{N}{g}}
\end{equation}
\end{thm}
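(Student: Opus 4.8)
The plan is to combine the overcounting statement of Theorem \ref{thm1} with classical M\"obius inversion, the same mechanism that produces the ordinary necklace polynomial \eqref{(1)}. The starting observation is that $Tr\,T^{N}$ counts \emph{rooted} closed non-backtracking tail-less paths of length $N$: by Theorem \ref{thm1} the diagonal entry $(T^{N})_{(a,a)}$ records each such path that begins (and ends) at the edge $a$, so summing over $a$ records every closed path once for each admissible choice of starting edge. My first step is therefore to reinterpret this quantity by grouping the rooted paths into their equivalence classes under circular permutation, keeping in mind that a cycle and its inverse are kept distinct, so inversion plays no role here.

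Next I would count how many rooted representatives a single class contributes. Writing a cycle of length $N$ as $q^{r}$ with $q$ non-periodic of length $d=N/r$, the cyclic group of shifts acts on its $N$ positions with a stabilizer of order $r$ (the shifts by multiples of $d$), so by orbit--stabilizer the class contains exactly $d$ distinct rooted sequences. Moreover, raising to the power $N/d$ gives a bijection between non-periodic classes of length $d$ and length-$N$ classes of primitive period $d$, and one checks that $q^{N/d}$ is automatically a legitimate cycle: the non-backtracking and tail-less conditions at the seams between successive copies of $q$ hold precisely because $q$ already closes up. Letting $A(d)$ denote the number of equivalence classes of non-periodic cycles of length $d$, this bookkeeping yields the key identity
\begin{equation*}
Tr\,T^{N} = \sum_{d \mid N} d \, A(d).
\end{equation*}

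With this in hand the conclusion is immediate. Setting $G(N)=Tr\,T^{N}$ and $F(d)=d\,A(d)$, the identity above reads $G(N)=\sum_{d\mid N} F(d)$, so the M\"obius inversion formula recalled in Section \ref{sec:pre} gives
\begin{equation*}
F(N)=\sum_{g\mid N}\mu(g)\,G(N/g), \qquad \text{i.e.} \qquad N\,A(N)=\sum_{g\mid N}\mu(g)\,Tr\,T^{N/g}.
\end{equation*}
Dividing by $N$ and recalling that $\Omega(N,T)=A(N)$ by definition produces \eqref{(4)}.

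The main obstacle I anticipate is not the inversion step, which is routine, but the careful justification of the identity $Tr\,T^{N} = \sum_{d \mid N} d\,A(d)$. Two points need genuine care: first, that every rooted closed path of length $N$ really is a power $q^{N/d}$ of a \emph{unique} non-periodic cycle, which amounts to uniqueness of the primitive root of a periodic word; and second, that the orbit-counting argument correctly assigns $d$ rootings to a class of primitive period $d$ rather than silently assuming $N$ rootings per class, which would fail exactly on the periodic cycles. Pinning down these two counting facts is where the real content of the argument lies.
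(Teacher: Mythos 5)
Your proposal is correct and follows essentially the same route as the paper: both decompose the $Tr\,T^{N}$ count according to the primitive period of each cycle class to obtain $Tr\,T^{N}=\sum_{d\mid N} d\,\Omega(d,T)$, and then apply M\"obius inversion. Your write-up is in fact somewhat more careful than the paper's, since you make explicit the orbit--stabilizer count of $d$ rootings per class and the verification that powers of a non-periodic cycle remain non-backtracking and tail-less at the seams, points the paper's proof treats implicitly.
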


\begin{proof}
In the set of $Tr T^{N}$  cycles  there
is the subset with  $N \Omega(N,T)$ elements formed by the non periodic cycles of length $N$ plus their circular
permutations 
and the subset with  $\sum_{g \neq 1|N}\frac{N}{g} \Omega(\frac{N}{g},T)$ elements formed by the periodic cycles
 of length $N$
(whose periods are the common divisors of $N$) plus their circular
permutations. 
(A cycle of period $g$ and length $N$  is of
the form
\begin{equation*}
(e_{k_{1}} e_{k_{2}}
...e_{k_{\alpha}})^{g}
\end{equation*}
where $\alpha= N/g$, and
$ (e_{k_{1}} e_{k_{2}}
...e_{k_{\alpha}}) $ is a non periodic cycle
so that the number of periodic cycles with period $g$ plus their
circular permutations  is given by $(N/g)\Omega(N/g, T)$).
Hence,
\begin{equation*} 
Tr T^{N}= \sum_{g \mid N} \frac{N}{g} \hspace{1mm} \Omega \left( \frac{N}{g},T
\right)
\end{equation*}
M\"obius inversion formula gives the result.
\end{proof}

\begin{rem} \label{rmk1}
Some terms in the right hand side of \eqref{(4)} are negative. 
In spite of that  the right hand side is always positive. Multiply both sides by $N$. The fist term equals $Tr T^{N}$
while the other terms give (in absolute value) the number according to period of the various subsets of  periodic cycles which are proper subsets of the larger set with $Tr T^{N}$ elements.
\end{rem}

\begin{rem} \label{rmk2}
Witt formula can be expressed in a form analogous to \eqref{(4)}.  Define $Q$ as the $R \times R$ matrix with all entries equal to one. The trace $Tr Q^{N}=R^{N}$  counts  counterclockwisely oriented cycles in the graph with $R$  loops counterclockwisely oriented and hooked to a single vertex so that
\begin{equation} \label{(5)}
{\cal M}(N;R) 
 = \frac{1}{N} \sum_{g \mid N} \mu (g) Tr Q^{\frac{N}{g}}
\end{equation}
\end{rem}

A recurrence relation which may be useful in practical calculations of $\Omega(N,T)$ is given next.

\begin{thm} \label{thm3}
\begin{equation} \label{(6)}
N\Omega (N,T) =   Tr T^{N} -    \sum_{g \mid N, g \neq N} g \hspace{1mm} \Omega \left( g,T
 \right)
\end{equation}
\end{thm}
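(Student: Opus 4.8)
The plan is to derive the recurrence directly from the summation identity established in the proof of Theorem~\ref{thm2}, before the M\"obius inversion is applied. That proof shows, by partitioning the $Tr T^{N}$ cycles according to their period, that
\begin{equation*}
Tr T^{N} = \sum_{g \mid N} \frac{N}{g}\, \Omega\!\left(\frac{N}{g}, T\right).
\end{equation*}
So rather than invoking the closed form \eqref{(4)} for $\Omega(N,T)$, I would take this un-inverted relation as the starting point, since it already expresses $Tr T^{N}$ as a sum of scaled $\Omega$-values over divisors.

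The key observation is that the map $g \mapsto N/g$ is an involution, hence a bijection, on the set of positive divisors of $N$. Reindexing the sum by $d = N/g$ therefore turns it into
\begin{equation*}
Tr T^{N} = \sum_{d \mid N} d\, \Omega(d, T),
\end{equation*}
a sum over the same divisor set but now with the running divisor appearing both as the multiplier and as the length argument of $\Omega$.

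The final step is to split off the top term $d = N$, which contributes exactly $N\,\Omega(N,T)$, leaving the remaining terms $\sum_{d \mid N,\, d \neq N} d\, \Omega(d, T)$; transposing this remainder to the left gives precisely \eqref{(6)}. I do not anticipate any genuine obstacle here: the whole argument is a one-line consequence of the period-partition identity from Theorem~\ref{thm2}, and the only point requiring a moment's care is verifying that $g \mapsto N/g$ is a bijection of the divisor set so that the reindexing is legitimate. One could alternatively obtain the same recurrence by substituting the closed form \eqref{(4)} and rearranging a double M\"obius sum, but that route is strictly more laborious and obscures the combinatorial transparency of the divisor-complement bijection, so I would favor the direct approach above.
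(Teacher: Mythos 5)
Your proposal is correct and is essentially identical to the paper's own proof: the paper likewise takes the period-partition identity from Theorem~\ref{thm2}, writes it in the reindexed form $Tr\, T^{N} = \sum_{g \mid N} g\, \Omega(g,T)$, and splits off the $g=N$ term. The only difference is that you make the divisor-complement bijection $g \mapsto N/g$ explicit, which the paper leaves implicit.
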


\begin{proof}
This follows from
\begin{equation*}
Tr T^{N}= \sum_{g \mid N} g \hspace{1mm} \Omega \left( g,T
\right) = N\Omega (N,T) +\sum_{g \mid N, g \neq N} g \hspace{1mm} \Omega \left( g,T
 \right).
\end{equation*}
\end{proof}

\section{Some identities satisfied by $\Omega$} \label{sec:count}

It turns out that $\Omega(N,T)$ satisfies some identities analogous to those satisfied by Witt formula proved in \cite{car,metro}. These identities are established in this section. In \cite{mor}, Moree proved similar identities for his Witt transform.

\begin{thm} \label{thm4}
Given the matrices $T_{1}$ and $T_{2}$ define
$S (s,T_{i})=
 \sum_{g|s} \mu(g) \hspace{1mm} Tr T_{i}^{\frac{N}{g}}$, $i=1,2$,
and 
denote by $T_{1} \otimes T_{2}$ the Kronecker product of $T_{1}$ and $T_{2}$. Then,
\begin{equation} \label{(7)}
\sum_{[s,t]=N} S (s,T_{1}) S (t,T_{2})
= S (N,T_{1} \otimes T_{2})
\end{equation}
The summation is over the set of positive integers $\{s,t \mid [s,t]=N\}$,  $[s,t]$ being the least common multiple of $s,t$. It also holds that
\begin{equation} \label{(8)}
S (N,T^{l})=\sum_{[l,t]=N l} S (t,T)
\end{equation}
\end{thm}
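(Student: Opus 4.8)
The plan is to prove both identities by the same device: pass from $S$ to its M\"obius inverse. By the definition of $S$ and the classical M\"obius inversion formula recalled in Section~\ref{sec:pre}, the relation $S(s,T)=\sum_{g\mid s}\mu(g)\,Tr\,T^{s/g}$ is equivalent to
\begin{equation*}
Tr\,T^{k}=\sum_{d\mid k}S(d,T).
\end{equation*}
Thus, to establish an identity of the form $S(N,\,\cdot\,)=F(N)$ for all $N$, it suffices to check that $\sum_{m\mid k}F(m)=Tr\,(\,\cdot\,)^{k}$ for every $k$, since M\"obius inversion then forces $F=S(\,\cdot\,)$ term by term. The only place the two matrix operations enter is in the computation of the relevant traces.

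For \eqref{(7)}, the starting point is the standard fact that the Kronecker product satisfies $(T_{1}\otimes T_{2})^{k}=T_{1}^{k}\otimes T_{2}^{k}$ together with $Tr(A\otimes B)=Tr\,A\cdot Tr\,B$, so that $Tr\,(T_{1}\otimes T_{2})^{k}=Tr\,T_{1}^{k}\cdot Tr\,T_{2}^{k}$. I would expand each factor by the inverse relation above, obtaining the double sum $\sum_{s\mid k}\sum_{t\mid k}S(s,T_{1})S(t,T_{2})$, and regroup it according to the value of $d=[s,t]$. The key combinatorial observation is that $s\mid k$ and $t\mid k$ hold simultaneously if and only if $[s,t]\mid k$; hence the double sum becomes $\sum_{d\mid k}\bigl(\sum_{[s,t]=d}S(s,T_{1})S(t,T_{2})\bigr)$. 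Comparing with $Tr\,(T_{1}\otimes T_{2})^{k}=\sum_{d\mid k}S(d,T_{1}\otimes T_{2})$ and invoking the uniqueness of M\"obius inversion gives \eqref{(7)} for every $N$.

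For \eqref{(8)}, the matrix side is even simpler: $(T^{l})^{k}=T^{lk}$, so $Tr\,(T^{l})^{k}=Tr\,T^{lk}=\sum_{d\mid lk}S(d,T)$, while the inverse relation applied to $T^{l}$ gives $Tr\,(T^{l})^{k}=\sum_{m\mid k}S(m,T^{l})$. On the other hand, writing $G(m)=\sum_{[l,t]=ml}S(t,T)$ and summing over $m\mid k$, I would note that for fixed $t$ the index $m$ is determined by $[l,t]=ml$, namely $m=[l,t]/l=t/\gcd(l,t)$, so that $\sum_{m\mid k}G(m)=\sum_{t:\,t/\gcd(l,t)\mid k}S(t,T)$. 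The identity then reduces to showing that this index set coincides with $\{d:d\mid lk\}$, after which M\"obius inversion yields $G(N)=S(N,T^{l})$, which is \eqref{(8)}.

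The main obstacle, and the one genuinely arithmetic step, is the divisibility equivalence
\begin{equation*}
t\mid lk \quad\Longleftrightarrow\quad \frac{t}{\gcd(l,t)}\;\Big|\;k,
\end{equation*}
on which the reindexing for \eqref{(8)} rests. I expect to settle it by writing $\gcd(l,t)=d$, $l=dl'$, $t=dt'$ with $\gcd(l',t')=1$: then $t\mid lk$ becomes $t'\mid l'k$, and coprimality of $t'$ and $l'$ collapses this to $t'\mid k$, i.e. $t/\gcd(l,t)\mid k$. The analogous but easier fact for \eqref{(7)} is the characterization $s\mid k\ \wedge\ t\mid k\iff[s,t]\mid k$, which is immediate from the lattice of divisors. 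Once these two divisor facts and the bijections between the relevant index sets are in hand, both identities follow formally from the inverse relation and the uniqueness of M\"obius inversion.
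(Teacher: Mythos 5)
Your proof is correct and follows essentially the same route as the paper: both arguments pass to the ``summed over divisors'' form of each identity, use $Tr(T_{1}\otimes T_{2})^{k}=Tr\,T_{1}^{k}\cdot Tr\,T_{2}^{k}$ (resp.\ $Tr\,(T^{l})^{k}=Tr\,T^{lk}$), regroup the resulting divisor sums, and conclude by M\"obius inversion. The only difference is one of detail: you explicitly prove the reindexing fact $t\mid lk \iff t/\gcd(l,t)\mid k$ that the paper's proof of \eqref{(8)} uses silently in the step ``the left hand side is equal to $\sum_{t\mid lN}S(t,T)$,'' which is a welcome clarification rather than a new method.
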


\begin{proof}
In order to prove  \eqref{(7)} it suffices to consider the equivalent formula (see \cite{car})
\begin{equation*}
\sum_{k|N}\sum_{[s,t]=k } S (s,T_{1}) S (t,T_{2})
= \sum_{k|N} S (k,T_{1} \otimes T_{2})
\end{equation*}
Using M\"obius inversion formula, the left hand side is equal to
\begin{equation*}
\sum_{s|N} S (s,T_{1}) \sum_{t|N} S (t,T_{2})
= (Tr T_{1}^{N}) (Tr T_{2}^{N})
\end{equation*}
But
$(Tr T_{1}^{N}) (Tr T_{2}^{N})=Tr (T_{1} \otimes T_{2})^{N}$. By
M\"obius inversion formula this gives the right hand side of the equivalent formula.
Using ideas from \cite{mor},
the next identity can be proved using the following equivalent formula:
\begin{equation*}
\sum_{g|N} \sum_{[l,t]=\frac{Nl}{g}} S (t,T)
= \sum_{g|N} S (N,T^{l})
\end{equation*}
The left hand side is equal to
$\sum_{t|lN} S(t,T)=Tr T^{lN}=Tr (T^{l})^{N}$.
Apply M\"obius inversion formula to get the result.
\end{proof}

\begin{rem} \label{rmk3}
Formula \eqref{(7)} may be generalized to the case $T=T_{1} \otimes T_{2} \otimes ...\otimes T_{l}$ to give
\begin{equation} \label{(9)}
\sum_{[s_{1}, \dots ,s_{l}]=N } S (s_{1},T_{1}) \dots S (s_{l},T_{l})
=  S (N,T)
\end{equation}
Also, it can be proved that
\begin{equation} \label{(13)}
S(N,T_{1}^{s} \otimes T_{2}^{r})=
\sum_{[rp,sq]=nrs} S (p,T_{1})S (q,T_{2})
\end{equation}
where $r$ and $s$ are relatively prime and the summation is over all positive integers $p,q$ such that $[rp,sq]=nrs$. The proof is an application of previous identities as in \cite{metro}, Theorem 5.
\end{rem}

\begin{rem} \label{rmk4}
In terms of $\Omega$, using that $[s,t](s,t)=st$, (7) becomes
\begin{equation} \label{(14)}
\sum_{[s,t]=N} (s,t) \Omega(s,T_{1}) \Omega (t,T_{2})
= \Omega (N,T_{1} \otimes T_{2})
\end{equation}
where $(s,t)$ is the maximum common divisor of $s$ and $t$.
This can be extended to the general case \eqref{(9)} to give
\begin{equation}
\sum_{[s_{1}, \dots ,s_{l}]=N } (s_{1}, \cdots, s_{l}  )\Omega (s_{1},T_{1}) \dots \Omega (s_{l},T_{l})
=  \Omega (N,T)
\end{equation}
where $(s_{1}, \cdots, s_{l})$  is the greatest common divisor of $(s_{1}, \cdots,s_{l})$ and the sum runs over all positive integers $(s_{1}, \cdots,s_{l})$ with least common multiple equal to $N$ and $T=T_{1} \otimes \dots \otimes T_{l}$. Also,
from (8),
\begin{equation} \label{(15)}
\Omega (N,T^{l})=\sum_{[l,t]=N l} \frac{t}{N}\Omega (t,T).
\end{equation}
In terms of $\Omega$ (10) reads
\begin{equation*}
N \Omega (N,T_{1}^{s} \otimes T_{2}^{r})=
\sum_{[rp,sq]=Nrs} pq \Omega (p,T_{1}) \Omega(q,T_{2})
\end{equation*}
Using $(rp,sq)[rp,sq]=rpsq$ with $[rp,sq]=Nrs$ implies $(rp,sq)N=pq$ and
\begin{equation*}
\Omega (N,T_{1}^{s} \otimes T_{2}^{r})=
\sum_{[rp,sq]=Nrs} (rp,sq) \Omega (p,T_{1}) \Omega(q,T_{2})
\end{equation*}
Replace $s$ and $r$ by $s/(r,s)$ and $r/(r,s)$ to get
\begin{equation}
(r,s) \Omega (N,T_{1}^{s/(r,s)} \otimes T_{2}^{r/(r,s)})=
 \sum (rp,sq) \Omega (p,T_{1}) \Omega(q,T_{2})
\end{equation}
The sum is over $p,q$  such that $pq/(pr,qs)=N/(r,s)$.
\end{rem}

Another identity satisfied by $\Omega$ is of the Witt type \eqref{(2)}. It is a well known result about the   $\zeta$ function of a graph $G$  which is defined as follows:
\begin{equation} \label{(17)}
\zeta(z):=\prod_{[p]} (1-z^{l(p)})^{-1}=\prod_{N=1}^{\infty} (1-z^{N})^{-\Omega(N,T)}
\end{equation}
See \cite{te} and  \cite{tee}. The first product is over the equivalence classes $[p]$ of backtrack-less 
and tail-less closed paths
 of length $l(p)$ in $G$.
It is a famous result that 
$\zeta=[det(1-zT)]^{-1}$, hence, 
$\Omega(N,T)$ satisfies the Witt type identity
\begin{equation} \label{(18)}
\prod_{N=1}^{\infty} (1-z^{N})^{\Omega(N,T)}=det(1-zT)
\end{equation}
As mentioned in the introduction, the coefficient of the linear term in Witt's identity is the negative of the number of loops in a graph with $R$ loops hooked to a single vertex.  The coefficients in the expansion of the determinant $det(1-zT)$ as a polynomial in $z$ also have  nice combinatorial meanings
related to the structure of the graph $G$ as proved by several authors. See \cite{storm} and  references therein. Next theorem gives formulas for these coefficients and for those of the inverse of the determinant which are relevant for the next section.

\begin{thm} \label{thm5} 
Define
\begin{equation} \label{(19)}
 g(z):=\sum_{N=1}^{\infty} \frac{ Tr T^{N} }{N} z^{N}.
 \end{equation}
Then,
\begin{equation} \label{(20)}
\prod_{N=1}^{+\infty}  (1-z^{N})^{\pm \Omega(N,T)}
 = 
e^{\mp g(z)}  =[det(1-zT)]^{\pm}= 1 \mp \sum_{i=1}^{+\infty} c_{\pm}(i) z^{i},\\
\end{equation}
where
\begin{equation} \label{(21)}
c_{\pm}(i)= \sum_{m=1}^{i} \lambda_{\pm}(m) \sum_{
\begin{array}{l} a_{1}+2a_{2}+...+ia_{i} =i\\
a_{1}+...+a_{i} = m \end{array}} 
 \prod_{k=1}^{i} 
\frac{(Tr T^{k})^{a_{k}}}{a_{k}! k^{a_{k}}}
\end{equation}
with $\lambda_{+}(m)=(-1)^{m+1}$, $\lambda_{-}(m)=+1$, $c_{+}(i)=0$ for $i > 2|E|$, and $c_{-}(i) \geq 0$. Furthemore,
\begin{equation} \label{(22)}
Tr T^{N} = N  \sum_{
\begin{array}{l}  s = (s_{i})_{i \geq 1}, s_{i} \in {\bf Z}_{\geq 0}\\
 \sum is_{i}=N  \end{array}} (\pm 1)^{|s|+1}
 \frac{(\mid s \mid -1)!}{s!} \prod c_{\pm}(i)^{s_{i}}\\
\end{equation}
where
$\mid s \mid = \sum s_{i}, s! = \prod s_{i} !$.
\end{thm}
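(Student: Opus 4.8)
The plan is to work entirely at the level of formal power series in $z$, reducing every claim to the single identity $g(z)=-\log\det(1-zT)$. To obtain this, I would write $\mathrm{Tr}\,T^{N}=\sum_{j}\lambda_{j}^{N}$, where the $\lambda_{j}$ are the eigenvalues of $T$, so that $g(z)=\sum_{N\ge1}\tfrac1N\sum_{j}(\lambda_{j}z)^{N}=-\sum_{j}\log(1-\lambda_{j}z)=-\log\prod_{j}(1-\lambda_{j}z)=-\log\det(1-zT)$. Exponentiating yields the two middle equalities $e^{\mp g(z)}=[\det(1-zT)]^{\pm}$ of \eqref{(20)} at once.

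For the leftmost equality of \eqref{(20)}, I would take the logarithm of $\prod_{N}(1-z^{N})^{\pm\Omega(N,T)}$ and expand each factor via $\log(1-z^{N})=-\sum_{k\ge1}z^{Nk}/k$. Collecting the coefficient of $z^{M}$ turns the log of the left side into $\mp\sum_{M}\tfrac1M\big(\sum_{N\mid M}N\,\Omega(N,T)\big)z^{M}$, and the divisor identity $\mathrm{Tr}\,T^{M}=\sum_{N\mid M}N\,\Omega(N,T)$ established in the proof of Theorem~\ref{thm2} collapses this to $\mp g(z)$, as required. For the expansion \eqref{(21)} I would apply the exponential (multinomial) formula to $e^{\mp g(z)}=\sum_{m\ge0}\tfrac{(\mp1)^{m}}{m!}\big(\sum_{k}\tfrac{\mathrm{Tr}\,T^{k}}{k}z^{k}\big)^{m}$; reading off the coefficient of $z^{i}$ and matching it against $\mp c_{\pm}(i)$ produces exactly \eqref{(21)}, the residual sign being $\mp(\mp1)^{m}$, which is $(-1)^{m+1}=\lambda_{+}(m)$ for the upper sign and $+1=\lambda_{-}(m)$ for the lower. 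The constraint $\sum_{k}ka_{k}=i$ automatically annihilates every $a_{k}$ with $k>i$ and forces $1\le m\le i$, giving the stated index ranges.

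The two remaining structural claims fall out of this setup. We have $c_{+}(i)=0$ for $i>2|E|$ because $\det(1-zT)$ is a polynomial in $z$ of degree at most $2|E|$, as $T$ is a $2|E|\times2|E|$ matrix; and $c_{-}(i)\ge0$ follows from the Euler-product form $[\det(1-zT)]^{-1}=\zeta(z)=\prod_{[p]}(1-z^{l(p)})^{-1}$ recorded in \eqref{(17)}, since every factor $(1-z^{l(p)})^{-1}=\sum_{j\ge0}z^{jl(p)}$ has non-negative integer coefficients and a product of such series inherits that property.

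Finally, for the inversion \eqref{(22)} I would invert the relation $e^{\mp g}=1\mp C_{\pm}$, where $C_{\pm}(z)=\sum_{i}c_{\pm}(i)z^{i}$, by taking logarithms: $\mp g=\log(1\mp C_{\pm})=\sum_{s\ge1}\tfrac{(-1)^{s+1}}{s}(\mp C_{\pm})^{s}$, whence $g=\sum_{s\ge1}\tfrac{(\pm1)^{s+1}}{s}C_{\pm}^{s}$ after simplifying $(-1)^{s+1}(\mp1)^{s+1}=(\pm1)^{s+1}$. Expanding $C_{\pm}^{s}$ multinomially and extracting the coefficient of $z^{N}$, which equals $\mathrm{Tr}\,T^{N}/N$, the factor $\tfrac1s\cdot\tfrac{s!}{\prod s_{i}!}$ becomes $\tfrac{(|s|-1)!}{s!}$ in the theorem's notation (since the outer index $s=|s|=\sum_{i}s_{i}$ is determined by $(s_{i})$), which after multiplication by $N$ is precisely \eqref{(22)}. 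I expect the only delicate point to be the sign bookkeeping in the last two paragraphs—tracking $\mp(\mp1)^{m}$ and $(\pm1)^{s+1}$ consistently across both signs—but once the substitutions are made the matching of coefficients is mechanical.
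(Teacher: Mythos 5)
Your proposal is correct and follows essentially the same route as the paper: take logarithms of the infinite product, expand $\log(1-z^{N})$ and invoke the divisor identity $\mathrm{Tr}\,T^{M}=\sum_{N\mid M}N\,\Omega(N,T)$ from Theorem~\ref{thm2}; identify $e^{\mp g}$ with $[\det(1-zT)]^{\pm}$ via $\mathrm{Tr}\log=\log\det$; extract \eqref{(21)} by expanding the exponential (the paper cites Faa di Bruno's formula, which is the same multinomial computation you perform); and obtain \eqref{(22)} by expanding $\mp\log(1\mp\sum_i c_{\pm}(i)z^{i})$ and comparing coefficients. The only cosmetic deviations are your use of eigenvalues for the trace-log step and the Euler-product argument for $c_{-}(i)\ge0$ (the paper deems this clear, as it already follows from \eqref{(21)} with $\lambda_{-}\equiv+1$ and $\mathrm{Tr}\,T^{k}\ge0$), and your sign bookkeeping checks out.
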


\begin{proof}
Define $P_{\pm}$ by 
\begin{equation*}
P_{\pm}(z)=\prod_{N'=1}^{+\infty} (1-z^{N'})^{\pm \Omega(N',T)}
\end{equation*}
Take the logarithm of both sides and use \eqref{(4)} to get
\begin{eqnarray*}
ln P_{\pm} &=&\mp \sum_{N'} \sum_{k} \frac{1}{k} \Omega(N',T) z^{N' k}
=\mp \sum_{N=1}^{+\infty} \sum_{k|N}\frac{1}{k} \Omega\left(\frac{N}{k}, T \right) z^{N}\\
&=& \mp \sum_{N=1}^{+\infty} \frac{Tr T^{N}}{N} z^{N} = \mp g(z)
\end{eqnarray*}
from which the first equality in \eqref{(20)} follows. 
From  the definition of $g(z)$, it follows that
\begin{eqnarray*}
\mp g(z):= \mp \sum_{N=1}^{\infty} \frac{Tr T^{N}}{N} z^{N} &=& \mp Tr \sum_{N=1}^{+\infty} \frac{1}{N} T^{N}  z^{N}
= \pm Tr \hspace{1mm}  ln(1-zT)\\
&=& \pm ln \hspace{1mm}  det(1-zT)\\
\end{eqnarray*}
proving the second equality in \eqref{(20)}.

The third equality is obtained formally expanding the exponential.
As the formal Taylor
expansion of $1-e^{\mp g}$, the coefficients  $c_{\pm}$ are given by
\begin{equation*}
c_{\pm}(i) =
\frac{1}{i!}
\frac{d^{i}}{d z^{i}} \left[ \pm(1-e^{\mp g}) \right]|_{z=0}
\end{equation*}
Using Faa di Bruno's formula as in \cite{coss},  the derivatives  can be computed explicitly and \eqref{(21)} follows. The determinant is a polynomial of maximum degree $2|E|$, hence, $c_{+}(i)=0$ for $i>2|E|$. Clearly, $c_{-}(i) \geq 0$.

To prove \eqref{(22)} write \cite{kangg}
\begin{eqnarray*}
\mp ln \left( 1 \mp \sum_{i} c_{\pm}(i) z^{i} \right) 
&=& \mp \sum_{l=1}^{+\infty} \frac{(-1)}{l} \left( \pm \sum_{i} c_{\pm}(i) z^{i} \right)^{l}\\
 &=& \pm \sum_{l=1}^{+\infty} \frac{(\pm 1)}{l} 
\sum_{\begin{array}{l}  s = (s_{i})_{i \geq 1}\\
s_{i} \in {\Bbb Z}_{\geq 0}\\
 \sum s_{i}=l  \end{array}} 
 \frac{(\sum s_{i})!}{\prod (s_{i} !)} \left(\prod c_{\pm}(i)^{s_{i}} \right) z^{\sum s_{i} l}\\
 &=& \sum_{k=1}^{+\infty}    z^{k} 
 \sum_{
\begin{array}{l}  s = (s_{i})_{i \geq 1}, s_{i} \in {\Bbb Z}_{\geq 0}\\
 \sum is_{i}=k  \end{array}} 
 (\pm 1)^{|s|+1} \frac{(\mid s \mid -1)!}{s!} \prod c_{\pm}(i)^{s_{i}}    
\end{eqnarray*}

The second equality in \eqref{(20)} applied to the left hand side yields
\begin{equation*}
\mp ln \left( 1\mp \sum_{i} c_{\pm}(i) z^{i} \right) =\sum_{k=1}^{+\infty} \frac{Tr T^{k}}{k} z^{k}
\end{equation*}
Compare coefficients to get the result.
\end{proof}

\begin{rem} \label{rmk5}
 Witt identity can be expressed in terms of a determinant:
\begin{equation*}
\prod_{N=1}^{\infty} (1-z^{N})^{{\cal M}(N;R)} = 1- Rz=det(1-zQ)
\end{equation*}
The proof is analogous to the proof of previous theorem using \eqref{(5)}.
\end{rem}

\section{ $\Omega$ and free Lie super algebras} \label{sec:lie}

As mentioned in the introduction
the coefficient in the Witt formula \eqref{(1)} has an algebraic interpretation as the negative of the dimension of a vector space that generates a free Lie algebra and the inverse of this formula is the generating function of dimensions of the subspaces of the enveloping algebra of the Lie algebra. Is it possible that the coefficients in the determinant $det(1-zT_{G})$ above have similar interpretation? 	The answer is positive. Formula \eqref{(4)} and the coefficients of the determinant and its inverse can be interpreted 
algebraically in terms of data related to Lie super algebras.

In series of papers S. -J. Kang and M. -H Kim \cite{kangg, kangggg} generalized Witt formula \eqref{(1)} to the case the free Lie algebra $L$ is generated by an infinite graded vector space. They obtained a generalized Witt formula for the dimensions of the homogeneous subspaces of $L$ which satisfies a generalized Witt identity. In \cite{kangggg} S. -J. Kang extended these results to super spaces and Lie super algebras. Some of the results are summarized in the following proposition.

\begin{prop} \label{pr6}
 Let $V= \bigoplus_{i=1}^{\infty}
V_{i}$ be a ${\mathbb{Z}}_{>0}$-graded super space  with finite dimensions $dim V_{i}= |t(i)|$ and super dimensions
$Dim V_{i}= t(i) \in {\mathbb{Z}}$, $\forall i \geq 1$. 
Let ${\cal L}=
\bigoplus_{N=1}^{\infty} {\cal L}_{N}$ be the free Lie super algebra generated
by $V$ with a ${\mathbb{Z}}_{>0}$-gradation induced by that of $V$. Then, the
super dimensions of the subspaces ${\cal L}_{N}$ are given by formula
\begin{equation} \label{(23)}
Dim {\cal L}_{N}= \sum_{g | N} \frac{\mu(g)}{g}  W \left(\frac{N}{g}\right)
\end{equation}
The summation ranges over all common divisors $g$ of $N$ and $W$ is given by
\begin{equation} \label{(24)}
 W(N)= \sum_{s \in T(N)} \frac{(\mid s \mid -1)!}{s!} \prod t(i)^{s_{i}}
\end{equation}
where $T(N)=\{ s = (s_{i})_{i \geq 1} \mid s_{i} \in {\mathbb{Z}}_{\geq 0}, 
\sum is_{i}=N \}$
and $\mid s \mid = \sum s_{i}$, $ s! = \prod s_{i} !$.
The numbers $Dim {\cal L}_{N}$  satisfy the  identity
\begin{equation}\label{(25)}
\prod_{N=1}^{\infty} (1-z^{N})^{Dim {\cal L}_{N}}= 1- \sum_{i=1}^{\infty} t(i) z^{i}.
\end{equation}
The right hand side of \eqref{(25)} is  related to the generating function for the $W$'s,
\begin{equation}\label{4.7}
g(z) :=\sum_{n=1}^{\infty} W(n)z^{n}
\end{equation}
by the relation
\begin{equation}\label{4.8}
e^{-g(z)}=  1-\sum_{i=1}^{\infty} t(i) z^{i}
\end{equation}
Furthermore,
\begin{equation}\label{4.9}
\prod_{N=1}^{\infty} \left( \frac{1}{1-z^{N}} \right)^{Dim {\cal L}_{N}}= 
1+\sum_{i=1}^{\infty} Dim U({\cal L})_{i} z^{i}
\end{equation}
where $Dim U({\cal L})_{i}$ is the dimension of the $i$-th homogeneous subspace of $U({\cal L})$, the
 universal enveloping algebra of ${\cal L}$.
\end{prop}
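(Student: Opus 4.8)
The plan is to obtain all four assertions as consequences of the super Poincar\'e--Birkhoff--Witt (PBW) theorem \cite{kangggg}, together with the observation that the universal enveloping algebra of the \emph{free} Lie super algebra $\mathcal L$ generated by $V$ is nothing but the free associative (tensor) superalgebra $T(V)=\bigoplus_{n\ge 0}V^{\otimes n}$. The two facts I would use repeatedly about the super dimension are that $\mathrm{Dim}$ is additive on direct sums and multiplicative on graded tensor products (with the Koszul sign convention). First I would compute the super-dimension generating series of $U(\mathcal L)=T(V)$ from the right: since $\mathrm{Dim}\,V_i=t(i)$ and $T(V)$ is the direct sum of the tensor powers of $V$, multiplicativity gives $\sum_i \mathrm{Dim}\,U(\mathcal L)_i\, z^i=\sum_{n\ge 0}\bigl(\sum_i t(i)z^i\bigr)^n=\bigl(1-\sum_i t(i)z^i\bigr)^{-1}$, which already identifies the right-hand sides of \eqref{4.8} and \eqref{4.9}.

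Next I would compute the \emph{same} series from the left using super PBW, which says that as a super vector space $U(\mathcal L)\cong S(\mathcal L_{\bar 0})\otimes\bigwedge(\mathcal L_{\bar 1})$. The key point is a sign miracle in each homogeneous degree $N$: a single even generator of degree $N$ contributes the polynomial factor $(1-z^N)^{-1}$ to the super-dimension series, while a single odd generator of degree $N$ contributes, through its exterior algebra $\mathbf 1\oplus(\text{odd})$, the factor $1-z^N=(1-z^N)^{-(-1)}$; since an even (resp.\ odd) generator carries super dimension $+1$ (resp.\ $-1$), in both cases the contribution is exactly $(1-z^N)^{-\mathrm{Dim}}$. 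Collecting the even and odd contributions in degree $N$ therefore produces the uniform factor $(1-z^N)^{-\mathrm{Dim}\,\mathcal L_N}$, and multiplying over $N$ gives $\prod_{N\ge 1}(1-z^N)^{-\mathrm{Dim}\,\mathcal L_N}$. Equating the two computations yields \eqref{4.9}; taking reciprocals yields \eqref{(25)}, and \eqref{4.8} is its logarithmic form.

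It then remains to extract the explicit formulas. Taking $\log$ of \eqref{(25)} and writing $g(z)=-\log\bigl(1-\sum_i t(i)z^i\bigr)=\sum_{l\ge 1}\tfrac1l\bigl(\sum_i t(i)z^i\bigr)^l$, the multinomial theorem reorganizes the right-hand side into $\sum_n z^n\sum_{s\in T(n)}\frac{(|s|-1)!}{s!}\prod_i t(i)^{s_i}$, which is precisely \eqref{4.7} with $W(n)$ as in \eqref{(24)}. On the other side, $\sum_N \mathrm{Dim}\,\mathcal L_N\log(1-z^N)=-\sum_M z^M\sum_{N\mid M}\tfrac{N}{M}\mathrm{Dim}\,\mathcal L_N$, so comparing coefficients gives $M\,W(M)=\sum_{N\mid M}N\,\mathrm{Dim}\,\mathcal L_N$. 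Applying the M\"obius inversion formula recalled in Section~\ref{sec:pre} to the pair of arithmetic functions $N\mapsto N\,\mathrm{Dim}\,\mathcal L_N$ and $M\mapsto M\,W(M)$ inverts this to $M\,\mathrm{Dim}\,\mathcal L_M=\sum_{N\mid M}\mu(M/N)\,N\,W(N)$, and rewriting the sum over $g=M/N$ produces exactly \eqref{(23)}.

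The step I expect to require the most care is the super PBW computation of the second paragraph: one must set up the ${\mathbb Z}_{>0}\times{\mathbb Z}_{2}$ bigrading on $\mathcal L$ honestly, keep the Koszul signs straight so that $\mathrm{Dim}$ is genuinely multiplicative on tensor products, and verify that the fermionic factor $1-z^N$ and the bosonic factor $(1-z^N)^{-1}$ really coalesce into the single exponent $-\mathrm{Dim}\,\mathcal L_N$ rather than into separate even and odd contributions. Once that sign bookkeeping is in place, the remaining manipulations --- the geometric series, the multinomial expansion, and the M\"obius inversion --- are routine formal power series identities.
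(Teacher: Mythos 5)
Your argument is correct, but there is nothing in the paper to compare it against: Proposition \ref{pr6} is stated explicitly as a summary of results quoted from Kang and Kang--Kim \cite{kangg,kangggg}, and the paper offers no proof, referring the reader to those sources. Your proposal is thus a self-contained reconstruction of what the paper delegates to the literature, and it follows the standard route for the free case: identify $U(\mathcal{L})$ with the tensor algebra $T(V)$, so that by additivity and multiplicativity of the superdimension its generating series is $\bigl(1-\sum_i t(i)z^i\bigr)^{-1}$; then recompute the same series through the super PBW decomposition $S(\mathcal{L}_{\bar 0})\otimes\bigwedge(\mathcal{L}_{\bar 1})$, where the key sign observation (an even degree-$N$ generator contributes $(1-z^N)^{-1}$, an odd one contributes $1-z^N$) collapses both parities into the single factor $(1-z^N)^{-\mathrm{Dim}\,\mathcal{L}_N}$; equating the two computations gives \eqref{4.9}, hence \eqref{(25)} and \eqref{4.8}, and your log/multinomial expansion plus M\"obius inversion of $M\,W(M)=\sum_{N\mid M}N\,\mathrm{Dim}\,\mathcal{L}_N$ yields \eqref{(24)} and \eqref{(23)} exactly. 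The individual steps all check out: superdimension is genuinely multiplicative on tensor products of super vector spaces (the Koszul signs affect the algebra structure, not dimension counts, so the ``care'' you flag in your last paragraph is not actually a danger point), and each $\mathcal{L}_N$ is finite dimensional since it sits inside $T(V)_N$. It is also worth noticing that your closing formal-series manipulations --- the multinomial expansion producing the coefficients $(|s|-1)!/s!$ and the divisor-sum inversion --- are precisely the computations the paper \emph{does} carry out in the proof of Theorem \ref{thm5} for the graph specialization $t(i)=c_{+}(i)$, $\mathrm{Dim}\,\mathcal{L}_N=\Omega(N,T)$; so your proof makes transparent why Proposition \ref{7} follows there by mere comparison of formulas.
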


In \cite{kangggg}, \eqref{(23)}  is called the {\it generalized Witt formula}; $W$ is
called {\it Witt partition function}; and \eqref{(25)} is called {\it
generalized Witt identity}.

See section 2.3 of \cite{kangggg}. Given a formal power series $\sum_{i=1}^{+\infty} t_{i} z^{i}$ with $ t_{i} \in {\Bbb Z}$, for all $i \geq 1$,
the coefficients in the series can be interpreted as the super dimensions of a ${\Bbb Z}_{>0}$-graded  super space $V= \bigoplus_{i=1}^{\infty}
V_{i}$ with dimensions $dim V_{i}= |t_{i}|$ and super dimensions
$Dim V_{i}= t_{i} \in {\Bbb Z}$.
Let ${\cal L}$ be the free Lie super algebra generated by $V$. Then, it has a gradation induced by $V$ and the homogeneous subspaces have dimension given by \eqref{(23)}.  Apply this interpretation to the  determinant $det(1-zT)$ which is a polynomial of degree $2|E|$ in the formal variable $z$. It can be taken as a power series with coefficients $t_{i}=0$, for $i > 2|E|$.
Comparison of the formulas
in Theorem \ref{thm5} with formulas
in the above Proposition  implies the next result:

\begin{prop} \label{7}
Given a graph $G$, $T$ its edge matrix, 
let
$V= \bigoplus_{i=1}^{2|E|}
V_{i}$ be a ${\mathbb{Z}}_{>0}$-graded super space with finite dimensions 
$dim V_{i}= |c_{+}(i)|$ and the super dimensions
$Dim V_{i}= c_{+}(i)$ given by \eqref{(21)}, the coefficients of $det(1-zT)$. Let ${\cal L}=
\bigoplus_{N=1}^{\infty} {\cal L}_{N}$ be the free Lie super algebra generated
by $V$. Then, the
super dimensions of the subspaces ${\cal L}_{N}$ are given by $Dim{\cal L}_{N} =\Omega(N, T)$. 
The algebra has generalized Witt identity given by \eqref{(18)}.
The zeta function of $G$ \eqref{(17)} is the generating function for the dimensions of the subspaces of
the enveloping algebra $U({\cal L})$ of ${\cal L}$ which are given by
$Dim U({\cal L})_{n}=c_{-}(n)$, $c_{-}(n)$ given by \eqref{(21)}
.
\end{prop}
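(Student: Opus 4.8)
The plan is to recognize this proposition as a direct application of Proposition \ref{pr6} to the formal power series $\det(1-zT)$, matching the quantities of Theorem \ref{thm5} with those of Proposition \ref{pr6}. Since $T$ has integer entries, $\det(1-zT)$ is a polynomial of degree $2|E|$ with integer coefficients, so the numbers $c_{+}(i)$ of \eqref{(21)} lie in ${\mathbb Z}$ and satisfy $c_{+}(i)=0$ for $i>2|E|$. This is exactly the data needed to invoke the super space interpretation recalled just before the proposition: set $Dim V_{i}=c_{+}(i)$, $dim V_{i}=|c_{+}(i)|$, and $V=\bigoplus_{i=1}^{2|E|}V_{i}$. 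With the identification $t(i)=c_{+}(i)$, the generating series $1-\sum_{i}t(i)z^{i}$ of \eqref{(25)} becomes $1-\sum_{i}c_{+}(i)z^{i}$, which by \eqref{(20)} is precisely $\det(1-zT)$.

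Next I would compute the Witt partition function \eqref{(24)} for this choice of $t(i)$ and show it equals $Tr T^{N}/N$. This is where formula \eqref{(22)} does the work: taking the upper sign there, the factor $(\pm 1)^{|s|+1}$ collapses to $(+1)^{|s|+1}=1$ identically, so \eqref{(22)} reads $Tr T^{N}=N\sum_{s\in T(N)}\frac{(|s|-1)!}{s!}\prod_{i}c_{+}(i)^{s_{i}}=N\,W(N)$, hence $W(N)=Tr T^{N}/N$. Substituting into the generalized Witt formula \eqref{(23)} gives
\begin{equation*}
Dim{\cal L}_{N}=\sum_{g|N}\frac{\mu(g)}{g}\,W(N/g)=\sum_{g|N}\frac{\mu(g)}{g}\cdot\frac{Tr T^{N/g}}{N/g}=\frac{1}{N}\sum_{g|N}\mu(g)\,Tr T^{N/g}=\Omega(N,T),
\end{equation*}
which is the main claim. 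The generalized Witt identity then follows immediately: by \eqref{(25)} one has $\prod_{N}(1-z^{N})^{Dim{\cal L}_{N}}=1-\sum_{i}c_{+}(i)z^{i}=\det(1-zT)$, which is \eqref{(18)}.

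For the enveloping algebra I would appeal to \eqref{4.9}. With $Dim{\cal L}_{N}=\Omega(N,T)$, its left-hand side is $\prod_{N}(1-z^{N})^{-\Omega(N,T)}$, which is simultaneously the zeta function \eqref{(17)} and, by the lower-sign case of \eqref{(20)}, equal to $[\det(1-zT)]^{-1}=1+\sum_{i}c_{-}(i)z^{i}$. Comparing with the right-hand side $1+\sum_{i}Dim\,U({\cal L})_{i}z^{i}$ of \eqref{4.9} and matching coefficients yields $Dim\,U({\cal L})_{i}=c_{-}(i)$ and identifies $\zeta$ as the asserted generating function.

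I expect no deep obstacle, since the result is ultimately a comparison of two explicit formulas. The one step demanding genuine care is the sign bookkeeping in \eqref{(22)}: one must confirm that the upper sign is the relevant one, so that $(\pm1)^{|s|+1}\equiv 1$ and \eqref{(22)} reproduces the unsigned Witt partition function \eqref{(24)} with $t(i)=c_{+}(i)$ rather than a signed variant. A secondary point worth a sentence is the legitimacy of taking the possibly negative integers $c_{+}(i)$ as super dimensions; this is exactly what the super space formalism permits, with integrality guaranteed by $T$ being an integer matrix. Alternatively, the equality $Dim{\cal L}_{N}=\Omega(N,T)$ could be reached without \eqref{(22)} by a uniqueness argument: both exponent sequences produce the same product $\det(1-zT)$ in a Witt-type expansion $\prod_{N}(1-z^{N})^{e_{N}}$, and such exponents are determined uniquely by the series (match coefficients of the logarithm order by order), forcing $Dim{\cal L}_{N}=\Omega(N,T)$.
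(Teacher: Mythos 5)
Your proposal is correct and takes essentially the same route as the paper: the paper justifies this proposition in one line as a ``comparison of the formulas in Theorem \ref{thm5} with formulas in the above Proposition,'' and your argument is exactly that comparison, identifying $t(i)=c_{+}(i)$, deducing $W(N)=Tr\,T^{N}/N$ from the upper-sign case of \eqref{(22)}, obtaining \eqref{(18)} from \eqref{(25)}, and reading off $Dim\,U({\cal L})_{n}=c_{-}(n)$ from \eqref{4.9} together with the lower-sign case of \eqref{(20)}. Your write-up merely supplies the details (notably the sign bookkeeping in \eqref{(22)}) that the paper leaves implicit.
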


\begin{exmp}
$G_{1}$, the graph with $R \geq 2$ edges counterclockwisely oriented and hooked to a single vertex shown in Figure 1. The edge matrix for $G_{1}$ is the $2R \times 2R$ symmetric matrix
\begin{equation*}
T_{G_{1}} = \left( \begin{array}{clcr}
A & B\\
B & A 
\end{array} \right)
\end{equation*}
\begin{center}
\begin{figure}[h]
\centering
\includegraphics[scale=0.5]{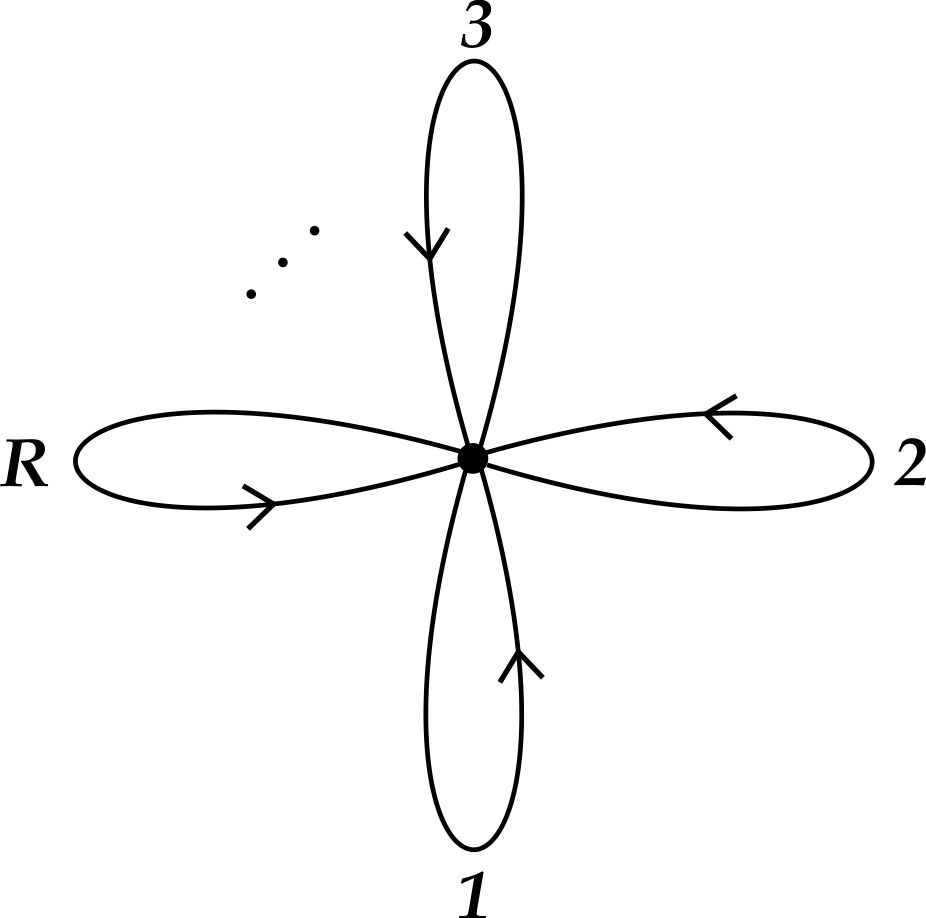}
\caption{Graph $G_{1}$}
\label{Fi:G1}
\end{figure}
\end{center}
where $A$ is the $R \times R$ matrix with all entries equal to $1$ and $B$  is the $R \times R$ matrix with  the main diagonal entries  equal to $0$ and all the other entries equal to $1$. This matrix has the trace given by
\begin{equation*}
Tr T_{G_{1}}^{N} = 1+(R-1)(1+(-1)^{N})+(2R-1)^{N}, \hspace{2mm} N=1,2, \dots
\end{equation*}
and the determinant
\begin{eqnarray*}
det(1-zT_{G_{1}}) &=& (1-z) \left[ 1-(2R-1)z \right](1-z^{2})^{R-1}\\
            &=& 1-\sum_{i=1}^{2R} c(i) z^{i}
\end{eqnarray*}
where $c(2R)=(-1)^{R}(2R-1)$,
\begin{equation*}
c(2i)=(-1)^{i}(2i-1) 
\left( \begin{array}{c}
R\\
i
\end{array} \right), \hspace{2mm} i=1, \cdots, R-1
\end{equation*}
and
\begin{equation*}
c(2i+1)=2R(-1)^{i} \left( \begin{array}{c}
R-1\\
i
\end{array} \right), \hspace{2mm} i=0,1, \cdots R-1
\end{equation*}
Futhermore,
\begin{eqnarray*}
[det(1-zT_{G_{1}})]^{-1} &=& \sum_{q=0}^{+\infty} z^{q} \sum_{i=0}^{q} a_{i} (2R-1)^{q-i}
\end{eqnarray*}
where
\begin{equation*}
a_{i}= \sum_{k=0}^{i} (-1)^{i-k} 
\left( \begin{array}{c}
k+R-1\\
R-1
\end{array} \right)
\left( \begin{array}{c}
i-k+R-2\\
R-2
\end{array} \right)
\end{equation*}

Let's consider the case $R=2$. In this case,
\begin{equation*}
Tr T_{G_{1}}^{N}=2+(-1)^{N}+3^N, \hspace{5mm} det(1-zT_{G_{1}})= 1-4z+2z^2+4z^3-3z^4
\end{equation*}
so that the number of classes of reduced nonperiodic cycles of length $N$ is given by the formula
\begin{equation*}
\Omega(N, T_{G_{1}}) = \frac{1}{N} \sum_{g|N} \mu (g) \left( 2+(-1)^{\frac{N}{g}}+3^{\frac{N}{g}} \right)
\end{equation*}
The graph generates the following algebra.
Let
$V= \bigoplus_{i=1}^{4}
V_{i}$ be a ${\bf Z}_{>0}$-graded super space  with dimensions  $dimV_{1}=4$, $dimV_{2}=2$, $dimV_{3}=4$, $dimV_{4}=3$ and super dimensions $DimV_{1}=-4$, $DimV_{2}=2$, $DimV_{3}=4$, $DimV_{4}=-3$.  Let ${\cal L}=
\bigoplus_{N=1}^{\infty} {\cal L}_{N}$ be the free graded Lie super algebra generated
by $V$.  The
dimensions of the subspaces ${\cal L}_{N}$ are given by the generalized Witt formula
\begin{equation*}
Dim{\cal L}_{N} =\frac{1}{N} \sum_{g|N} \mu (g) \left( 2+(-1)^{\frac{N}{g}}+3^{\frac{N}{g}} \right)
\end{equation*}
which satisfies the generalized Witt identity
\begin{eqnarray*}
\prod_{N=1}^{+\infty} (1-z^{N})^{\Omega(N, T_{G_{1}})}
&=&  1-[4z-2z^2-4z^3+3z^4]\\
\end{eqnarray*}
 The subspace $U_{n}({\cal L})$ of the enveloping algebra $U({\cal L})$ have dimensions given by 
the zeta function of the graph,
\begin{eqnarray*}
\prod_{N=1}^{+\infty} (1-z^{N})^{-\Omega(N, T_{G_{1}})}
&=& 1+\frac{1}{16}\sum_{n=1}^{\infty} ((-1)^{n}+39 \cdot 3^{n}-24-12n) z^{n}\\
\end{eqnarray*}
so that
\begin{displaymath}
Dim U_{n}({\cal L})=\frac{1}{16} \left(  (-1)^{n}+39 \cdot 3^{n}-24-12n\right)
\end{displaymath}
\end{exmp}

\begin{exmp}
$G_{2}$, the bipartite graph shown in Figure 2.
\begin{center}
\begin{figure}[h]
\centering
\includegraphics[scale=0.5]{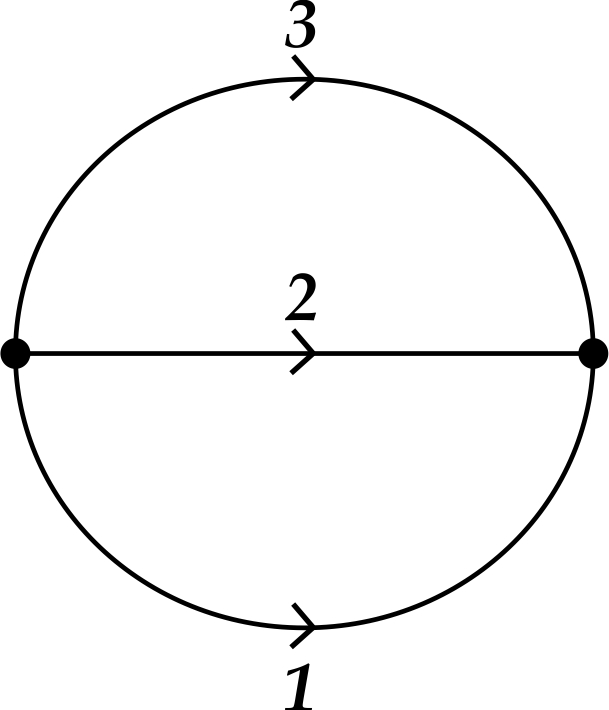}
\caption{Graph $G_{2}$}
\label{Fi:G2}
\end{figure}
\end{center}
The edge matrix of $G_{2}$ is
\begin{equation*}
T_{G_{2}} = \left( \begin{array}{clcrclcr}
0 & 0 & 0 & 0 & 1 & 1\\
0 & 0 & 0 & 1 & 0 & 1 \\
0 & 0 & 0 & 1 & 1 & 0 \\
0 & 1 & 1 & 0 & 0 & 0\\
1 & 0 & 1 & 0 & 0 & 0\\
1 & 1 & 0 & 0 & 0 & 0 
\end{array} \right)
\end{equation*}
\\
\\
The matrix has the trace
$Tr T_{G_{2}}^{N}= 0$ if $N$ is odd and $Tr T_{G_{2}}^{N}=
4+2 \cdot 2^{N}$ if $N$ is even, and the determinant
\begin{equation*}
det(1-zT_{G_{2}})= 1-6z^2+9z^4-4z^6
\end{equation*}
If $N$ is odd, the number of classes of nonperiodic cycles of length $N$ is $\Omega(N, T_{G_{2}} )=0$, if $N$ is odd, and
\begin{equation*}
\Omega (N, T_{G_{2}})= \frac{1}{N}\sum_{g|N} \mu (g) Tr T_{G_{2}}^{\frac{N}{g}} 
\end{equation*}
if $N$ is even. The graph generates the following algebra.
Let
$V= \bigoplus_{i=1}^{3}
V_{2i}$ be a ${\bf Z}_{>0}$-graded superspace  with dimensions  $dimV_{2}=6$, $dimV_{4}=9$, $dimV_{6}=4$ and superdimensions $DimV_{2}=6$, $DimV_{4}=-9$, $DimV_{6}=4$.  Let ${\cal L}=
\bigoplus_{N=1}^{\infty} {\cal L}_{N}$ be the free graded Lie superalgebra generated
by $V$.  The
dimensions of the subspaces ${\cal L}_{N}$ are $Dim{\cal L}_{N}=0$, for $N$ odd and
\begin{equation*}
Dim{\cal L}_{N} =\frac{1}{N}\sum_{g|N} \mu (g) Tr T_{G_{2}}^{\frac{N}{g}} 
\end{equation*}
for $N$ even. The dimensions satisfy 
the generalized Witt identity
\begin{eqnarray*}
\prod_{N=1}^{+\infty} (1-z^{N})^{\Omega(N, T_{G_{2}})}
&=&  1-[6z^2-9z^4+4z^6]
\end{eqnarray*}
The generating function for the dimensions of the subspaces $U_{n}({\cal L})$ of the enveloping algebra $U({\cal L})$ is given by 
\begin{eqnarray*}
\prod_{N=1}^{+\infty} (1-z^{N})^{-\Omega(N, T_{G_{2}})}
&=& 1+\frac{1}{18}\sum_{n=1}^{\infty} (2^{2n+5}-6n-14) z^{2n}\\
\end{eqnarray*}
so that
\begin{equation*}
Dim U_{n}({\cal L})= 2^{2n+5}-6n-14
\end{equation*}
\end{exmp}

\section{Necklace colorings induced by paths} \label{sec:col}

Given the set of $2|E|$ colors $\{c_{1},..., c_{2|E|} \}$,
assign the colors $c_i, c_{|E|+i}$ to edges $e_{i}, e_{|E|+i}=e_{i}^{-1} \in G^*$, respectively, so that
to a cycle of length $N$ in $G$ corresponds an ordered sequence of $N$ colors. Now, assign each color in this sequence to a bead in a circular string with $N$ beads - a necklace - in such a manner that two adjacent colors in the sequence are assigned to adjacent beads. The non backtracking condition for cycles implies that 
no two adjacent  beads are painted with colors, say, $c_i$ and $c_{|E|+i}$.
It is clear that there is a 
 correspondence between the classes of nonperiodic cycles of length $N$ in $G$ and classes of  nonperiodic colorings of a necklace with $N$ beads with at most $2|E|$ distinct colors  induced by the cycles so that the number of inequivalent colorings is $\Omega_{G}(N, T)$.  Of course
the structure of the graph reflects itself  on the coloring.
For instance, the presence of loops in the graph means that their assigned colors  may appear repeated in a string of adjacent beads.  This can not happen to a color assigned to an edge which is not a loop. 
 The edge matrix $T$ may be called the {\it color matrix}. It basically tells what colors are allowed to follow a given color in the necklace. 
Element $T_{ij}=1$, if a color $c_j$ can follow color $c_i$ and $c_j \neq
c_{|E|+i}$;
$T_{ij}=0$, otherwise. 

\begin{exmp}
 The number of nonperiodic colorings of a necklace with $N$ beads with at most 6 colors and color matrix given by graph $G_{1}$ with $R=3$:
\begin{equation*}
T_{G_{1}} = \left( \begin{array}{clcrclcr}
1 & 1 & 1 & 0 & 1 & 1\\
1 & 1 & 1 & 1 & 0 & 1 \\
1 & 1 & 1 & 1 & 1 & 0 \\
0 & 1 & 1 & 1 & 1 & 1 \\
1 & 0 & 1 & 1 & 1 & 1 \\
1 &  1 & 0 & 1 & 1 & 1\\
\end{array} \right)
\end{equation*}
is
\begin{equation*}
\Omega(N,T_{G_{1}}) = 
\frac{1}{N} \sum_{g|N} \mu (g) \left( 3+2(-1)^{\frac{N}{g}}+5^{\frac{N}{g}} \right)
\end{equation*}
For $N=3$, $\Omega(3, T_{G_{1}})=40$. The classes of nonperiodic
colorings are $[c_{i}c_{j}^{2}]$,
$[c_{3+i} c_{j}^{2}]$, $[c_{i} c_{3+j}^{2}]$,
$[{c_{3+i}} \hspace{1mm} {c_{3+j}}^{2}]$,
$[c_{i}^{2} c_{j}]$, $[{c_{3+i}}^{2} c_{j}]$,
$[c_{i}^{2} {c_{3+j}}]$, $[{c_{3+i}}^{2} {c_{3+j}}]$, for $(i,j) = (1,2), (1,3), (2,3)$ and
$[c_{i}c_{j}c_{k}]$, $[{c_{3+i}} c_{j}c_{k}]$, $[c_{i} {c_{3+j}} c_{k}]$,$[c_{i} c_{j}{c_{3+k}}]$, $[{c_{3+i}} \hspace{1mm} { c_{3+j}} c_{k}]$, $[{c_{3+i}} c_{j}{c_{3+k}}]$, $[c_{i} {c_{3+j}} \hspace{1mm} {c_{3+k}}]$, $[{c_{3+i}} \hspace{1mm} {c_{3+j}} \hspace{1mm} {c_{3+k}}]$, for $(i,j,k) = (1,2,3), (1,3,2)$. These corresponds to the classes of cycles
$[e_{i}^{+1} e_{j}^{+2}]$,
$[e_{i}^{-1} e_{j}^{2}]$, $[e_{i}^{+1} e_{j}^{-2}]$,
$[e_{i}^{-1} e_{j}^{-2}]$, $[e_{i}^{+2} e_{j}^{+1}]$,
$[e_{i}^{-2} e_{j}^{+1}]$, $[e_{i}^{+2} e_{j}^{-1}]$, $[e_{i}^{-2} e_{j}^{-1}]$, for $(i,j) = (1,2), (1,3), (2,3)$ and
$[e_{i}^{+1} e_{j}^{+1}e_{k}^{+1}]$, $[e_{i}^{-1} e_{j}^{+1}e_{k}^{+1}]$, $[e_{i}^{+1} e_{j}^{-1}e_{k}^{+1}]$,$[e_{i}^{+1} e_{j}^{+1}e_{k}^{-1}]$, $[e_{i}^{-1} e_{j}^{-1}e_{k}^{+1}]$, $[e_{i}^{-1} e_{j}^{+1}e_{k}^{-1}]$,$[e_{i}^{+1} e_{j}^{-1}e_{k}^{-1}]$, $[e_{i}^{-1} e_{j}^{-1}e_{k}^{-1}]$, for $(i,j,k) = (1,2,3), (1,3,2)$.
\end{exmp}

\begin{exmp}
For the graph $G_2$ assign to the three oriented edges $e_{1},e_{2},e_{3}$ the colors $c_1, c_2, c_3$, respectively. 
The graph is bipartite so only paths of even length are possible.   The number of inequivalent nonperiodic 
colorings induced by the nonperiodic cycles of length $N$ is given by 
\begin{equation}
\Omega(N, T_{G_{2}})= \frac{1}{N}
\sum_{g|N} \mu (g)
Tr T_{G_{2}}^{\frac{N}{g}}
\end{equation}
For $N=2$, $\Omega(2, T_{G_{2}})=6$. The classes are $[e_{1} e_{2}^{-1}]$, $[e_{1}^{-1} e_{2}]$, $[e_{1} e_{3}^{-1}]$, $[e_{1}^{-1} e_{3}]$, $[e_{2} e_{3}^{-1}]$,  $[e_{2}^{-1} e_{3}]$. The induced colorings are $[c_{1} {c_{5}}]$, $[{c_{4}}, c_{2}]$, $[c_{1} {c_{6}}]$, $[{c_{4}} c_{3}]$, $[c_{2} {c_{6}}]$,  $[{c_{5}} c_{3}]$.
For $N=4$ , $\Omega(4, T_{G_{2}})=6$. The paths are  $[e_{1} e_{2}^{-1} e_{3} e_{2}^{-1}]$,
$[e_{1} e_{2}^{-1} e_{1} e_{3}^{-1}]$, $[e_{1} e_{3}^{-1} e_{2} e_{3}^{-1}]$, $[e_{1}^{-1} e_{2} e_{1}^{-1}e_{3}]$,
 $[e_{1}^{-1} e_{2} e_{3}^{-1}e_{2}]$, $[e_{1}^{-1} e_{3} e_{2}^{-1}e_{3}]$.
To these classes correspond the colorings
 $[c_{1} {c_{5}} c_{3} {c_{5}}]$,
$[c_{1} {c_{5}} c_{1} {c_{6}}]$, $[c_{1} {c_{6}} c_{2} {c_{6}}]$, $[{c_{4}} c_{2} {c_{4}}c_{3}]$,
 $[{c_{4}} c_{2} {c_{6}}c_{2}]$, $[{c_{4}} c_{3} {c_{5}}c_{3}]$.
The graph has no loops so strings of two or more beads with a same color is not possible.
\end{exmp}

\begin{ack}
I would like to thank Prof. C. Storm (Adelphi University, USA) for sending
me his joint paper with G. Scott on the coefficients of Ihara zeta function. Also, many thanks to Prof.
Asteroide Santana for his help with the figures, latex commands and determinants.
\end{ack}

\end{document}